\documentclass[11pt,twoside]{article}
\usepackage{a4}
\usepackage{amssymb,amsmath,amsthm,latexsym}
\usepackage{amsfonts}
  \usepackage{amsfonts,hyperref}
\usepackage{graphicx}

\newtheorem{theorem}{Theorem}[section]

\newtheorem{corollary}[theorem] {Corollary}

\newtheorem{proposition}[theorem]{Proposition}
\newtheorem{remark}[theorem]{Remark}

\voffset=-12mm
\mathsurround=2pt
\parindent=12pt
\parskip= 4.5 pt
\lineskip=3pt
\oddsidemargin=10mm
\evensidemargin=10mm
\topmargin=55pt
\headheight=12pt
\footskip=30pt
\textheight 8.1in
\textwidth=150mm
\raggedbottom
\pagestyle{myheadings}
\hbadness = 10000
\tolerance = 10000

\vspace{5cm}

  \begin{document}
  
  \label{'ubf'}  
\setcounter{page}{1}                                 

\markboth {\hspace*{-9mm} \centerline{\footnotesize \sc
   Extensions of some results of Jovovic and Dhar  }
                 }
                { \centerline                           {\footnotesize \sc  
         Pankaj Jyoti Mahanta and Manjil P. Saikia                                                } \hspace*{-9mm}              
               }

\vspace*{-2cm}

\begin{center}
{ 
       {\Large \textbf { \sc  Extensions of some results of Jovovic and Dhar
                               }
       }
\\

\medskip

{\sc Mahanta, Pankaj Jyoti \& Saikia, Manjil P.}\\
{\footnotesize Gonit Sora, Dhalpur, Assam 784165, India}\\
{\footnotesize Mathematical and Physical Sciences division, School of Arts \& Sciences, Ahmedabad University, Navrangpura, Ahmedabad - 380009, Gujarat, India
}\\
{\footnotesize e-mail: {\it pankaj@gonitsora.com} \& {\it manjil@saikia.in}}
}
\end{center}

\thispagestyle{empty}

\hrulefill

\begin{abstract}  
{\footnotesize  We look at extensions of formulas given by Jovovic and recently proved by Dhar on integer partitions where the smallest part occurs at least $m$ times and on integer partitions with fixed differences between the largest and smallest parts where the smallest part occurs at least $k$ times. Our results extend Dhar's results for the $m=2$ and $k=1$ cases to the general cases for arbitrary $m$ and $k$. We also look at analogous results for overpartitions and $\ell$-regular partitions.
}
 \end{abstract}
 \hrulefill

{\small \textbf{Keywords:} integer partitions, restricted integer partitions.}

\indent {\small {\bf 2000 Mathematics Subject Classification:} 11P81, 05A17.}

\section{Introduction}\label{sec:intro}

A partition $\lambda$ of $n$ is a non-increasing sequence of natural numbers $\lambda_1\geq \lambda_2\geq \cdots \geq \lambda_k$, such that $\sum_{i=1}^k\lambda_i=n$, where $\lambda_i$'s are called the parts of the partition and $k$ is called the length of the partition. Here, $\lambda_1$ and $\lambda_k$ are called the largest and smallest parts of the partition respectively. It is customary to denote by $p(n)$ the number of partitions of $n$. Several interesting statistics on ordinary partitions and other generalizations have been studied since decades. Andrews \cite{andrews1998theory} wrote the standard reference book on partitions to which we refer the reader for more details.

Recently, Dhar \cite{dhar2021proofs} studied the statistic $a_m(n)$ (\href{https://oeis.org/A117989}{A117989}), which counts the number of partitions of $n$ where the smallest part occurs at least $m$ times, for $m=2$. He proved the following proposition using both analytic and combinatorial methods.
\begin{proposition}[Formula 1, \cite{dhar2021proofs}]\label{prop1}
For all natural numbers $n$, we have
\[a_2(n)=2p(n)-p(n+1).\]
\end{proposition}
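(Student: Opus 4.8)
The plan is to prove the equivalent identity $a_2(n) = p(n) - \bigl(p(n+1) - p(n)\bigr)$, reducing everything to two well-understood quantities. First I would split the partitions of $n$ according to the multiplicity of their smallest part: writing $c(n)$ for the number of partitions of $n$ whose smallest part occurs \emph{exactly once}, every partition of $n$ (for $n \geq 1$) has a smallest part occurring either exactly once or at least twice, so $p(n) = a_2(n) + c(n)$. Thus it suffices to prove $c(n) = p(n+1) - p(n)$.

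Next I would recall the classical fact that $p(n+1) - p(n)$ counts the partitions of $n+1$ having no part equal to $1$: partitions of $n+1$ that do contain a $1$ are in obvious bijection (delete one $1$) with all partitions of $n$, so the complementary count is $p(n+1) - p(n)$. The heart of the argument is then a bijection $\phi$ between partitions of $n$ with a unique smallest part and partitions of $n+1$ with smallest part at least $2$. The map I would use is the simplest possible: if $\lambda$ has unique smallest part $s$, replace that part by $s+1$. Since $s$ was unique, all remaining parts are $\geq s+1$, so the image is a partition of $n+1$ all of whose parts are $\geq 2$.

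The step I expect to be the main obstacle is verifying that $\phi$ is genuinely a bijection, because of a multiplicity subtlety: when the second-smallest part of $\lambda$ already equals $s+1$, the operation creates a repeated smallest part in the image, so $\phi$ does not literally preserve uniqueness of the smallest part. I would handle this by exhibiting the inverse directly: given a partition $\mu$ of $n+1$ with smallest part $t \geq 2$, decrease a single copy of $t$ to $t-1$. The resulting part $t-1$ is strictly smaller than every other part, hence is the unique smallest part, and this operation undoes $\phi$ on both of its cases (whether or not the smallest part of $\mu$ is repeated). Checking that $\phi$ and this inverse are mutually inverse, and that positivity $t-1 \geq 1$ always holds, completes the bijection and yields $c(n) = p(n+1) - p(n)$, whence $a_2(n) = 2p(n) - p(n+1)$.

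As a cross-check I would also give an analytic proof of $c(n) = p(n+1) - p(n)$. Writing $f(s) = \prod_{j \geq s}(1-q^j)^{-1}$ and $P(q) = f(1)$ for the partition generating function, the partitions with smallest part exactly $s$ have generating function $q^s f(s)$, so $\sum_{s \geq 1} q^s f(s) = P(q) - 1$, while the unique-smallest-part partitions contribute $C(q) = \sum_{s \geq 1} q^s f(s+1)$. A single reindexing $t = s+1$ gives $q\,C(q) = \sum_{t \geq 2} q^t f(t) = \bigl(P(q) - 1\bigr) - q\,P(q)$, that is $C(q) = \frac{P(q)-1}{q} - P(q) = \sum_{n} \bigl(p(n+1)-p(n)\bigr)q^n$. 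This telescoping is routine once the generating functions are set up, so I would present the bijection as the main argument and treat the generating-function computation as confirmation.
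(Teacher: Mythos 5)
Your proof is correct and takes essentially the same route as the paper (which follows Dhar's combinatorial argument, Case 1 of the proof of Theorem \ref{G1}): complementary counting $p(n)=a_2(n)+c(n)$ combined with the fact that partitions of $n$ whose smallest part occurs exactly once are equinumerous with partitions of $n+1$ having no part equal to $1$, counted by $p(n+1)-p(n)$. Both your explicit bijection (with the careful treatment of the multiplicity subtlety) and your generating-function cross-check are sound.
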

\noindent He further proved the following result combinatorially.
\begin{proposition}[Formula 2, \cite{dhar2021proofs}]\label{prop2}
For all natural numbers $n$, we have
\[a_2(n)=p(2n,n),\]
where $p(m,n)$ is the number of partitions of $m$ with fixed difference between the largest and smallest parts equal to $n$.
\end{proposition}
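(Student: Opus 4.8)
The plan is to exhibit a bijection between the two sets being counted. On one side we have partitions of $n$ whose smallest part occurs at least twice (counted by $a_2(n)$); on the other side we have partitions of $2n$ whose largest and smallest parts differ by exactly $n$ (counted by $p(2n,n)$).

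First I would set up the correspondence. Given a partition $\lambda$ of $n$ with smallest part $s$ occurring at least twice, I want to produce a partition of $2n$ in which the gap between the largest and smallest part is exactly $n$. The natural device is to add a single large part equal to $s+n$ while removing one copy of the smallest part $s$, so that the total changes from $n$ to $n + (s+n) - s = 2n$, and the largest part $s+n$ now sits exactly $n$ above the smallest part $s$. Concretely, I would send $\lambda = (\lambda_1, \dots, \lambda_{k-1}, s, s)$ to $\mu = (s+n, \lambda_1, \dots, \lambda_{k-1}, s)$, after reordering so parts are non-increasing. The condition that $s$ occurred at least twice in $\lambda$ is exactly what guarantees $s$ survives as a genuine part of $\mu$, and since every $\lambda_i \le n$ we have $\lambda_i \le s+n$, so $s+n$ really is the largest part of $\mu$.

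Next I would verify this map lands in the target set and construct the inverse. For any $\mu$ counted by $p(2n,n)$ with largest part $L$ and smallest part $t$, the relation $L - t = n$ forces the total of the remaining parts, so I would argue that $L$ is necessarily a strict maximum (i.e. occurs once) and recover $\lambda$ by deleting the largest part $L$ and appending one extra copy of the smallest part $t$; this reverses the construction and produces a partition of $2n - L + t = 2n - n = n$. The key consistency check is that removing $L$ and adding a copy of $t$ yields a partition whose smallest part is $t$ occurring at least twice, which is precisely the defining condition of the $a_2(n)$ side.

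The main obstacle I anticipate is the bookkeeping at the boundary of the largest part: I must confirm that $L = t+n$ occurs with multiplicity exactly one in every $\mu \in p(2n,n)$, so that deleting ``the largest part'' is unambiguous and the map is genuinely invertible. If $L$ occurred twice, then $2L = 2t + 2n > 2n$ already exceeds the total whenever $t \ge 1$, which should rule this out and make the argument clean; I would phrase this as the step that pins down well-definedness in both directions. Once that is settled, checking that the two maps are mutually inverse is routine, and the bijection yields $a_2(n) = p(2n,n)$ directly.
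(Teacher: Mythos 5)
Your bijection is correct and is essentially the same one the paper uses: the paper (in Proposition \ref{prop3}, of which this statement is the $m=2$ case) removes the largest part $n+k$ from a partition counted by $p(2n,n)$ and adds a part of size $k$, which is exactly the inverse of your map. Your additional check that the largest part $L=t+n$ has multiplicity one is the right well-definedness verification, so the argument goes through as you outline it.
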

\noindent The function $p(m,n)$ was studied by Andrews, Beck and Robbins \cite{andrews-pams} who gave a generating function for it.

In this paper we present two generalizations of Proposition \ref{prop1} to $a_m(n)$, from which Proposition \ref{prop1} follows as a corollary. The first generalization is proved combinatorially and is given below.
\begin{theorem}\label{G1}
For all $n\geq1$ and $m\geq2$, we have
\[a_m(n)=2p(n)-p(n+1)-p(n-2)+p(n-m)-\sum_{\ell=2}^{m-1}\sum_{k=3}^{\big\lfloor\frac{n}{\ell}\big\rfloor+1}\mathcal{Q}_{\ell,k}(n),\]
where $\mathcal{Q}_{\ell,k}(n)$ is the number of partitions of $n-\ell(k-1)$ with smallest part at least $k$.
\end{theorem}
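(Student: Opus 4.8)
The plan is to classify the partitions counted by $a_m(n)$ according to how many times the smallest part occurs, and then to peel off the forbidden multiplicities $2,3,\dots,m-1$ one value of the smallest part at a time. Write $b_j(n)$ for the number of partitions of $n$ whose smallest part occurs exactly $j$ times, so that $a_m(n)=\sum_{j\ge m}b_j(n)$ and $a_2(n)=\sum_{j\ge 2}b_j(n)$. Subtracting gives $a_m(n)=a_2(n)-\sum_{j=2}^{m-1}b_j(n)$, and Proposition~\ref{prop1} supplies $a_2(n)=2p(n)-p(n+1)$. Hence the theorem is equivalent to the combinatorial identity
\[
\sum_{j=2}^{m-1}b_j(n)=p(n-2)-p(n-m)+\sum_{\ell=2}^{m-1}\sum_{k=3}^{\lfloor n/\ell\rfloor+1}\mathcal{Q}_{\ell,k}(n),
\]
and the rest of the work is to prove this identity bijectively.

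The basic tool is the bijection that deletes all copies of the smallest part: a partition of $n$ whose smallest part equals $k$ and occurs exactly $j$ times corresponds to an arbitrary partition of $n-jk$ all of whose parts exceed $k$. I would split each $b_j(n)$ according to the value $k\ge 1$ of its smallest part and handle $k=1$ first, since it telescopes. Partitions of $n$ with smallest part $1$ occurring exactly $j$ times biject with partitions of $n-j$ into parts $\ge 2$, whose number is $p(n-j)-p(n-j-1)$ (restore or remove the parts equal to $1$). Summing over $j=2,\dots,m-1$ collapses the telescoping sum to $p(n-2)-p(n-m)$, which is precisely the first two correction terms in the displayed identity.

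The remaining contribution, coming from smallest part $k'\ge 2$, is where the double sum over $\mathcal{Q}_{\ell,k}$ enters and is the crux of the argument. Here I would \emph{adjoin} parts rather than delete them: starting from a partition $\nu$ of $n-\ell(k-1)$ into parts $\ge k$ (the object recorded by $\mathcal{Q}_{\ell,k}(n)$), adjoin $\ell$ parts equal to $k-1$. The result is a partition of $n$ whose smallest part is $k-1$ and occurs exactly $\ell$ times, because every part of $\nu$ is at least $k>k-1$; conversely, stripping the $\ell$ copies of the smallest part recovers $\nu$. Writing $k=k'+1$, this gives a bijection between the partitions of $n$ with smallest part $k'\ge 2$ occurring exactly $\ell$ times and the partitions counted by $\mathcal{Q}_{\ell,k}(n)$, and summing over $\ell$ and $k$ produces exactly the double sum.

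The main obstacle is the careful bookkeeping in this last step: getting the index shift $k=k'+1$ right (which is why the inner index starts at $k=3$), pinning down the summation range — the count of partitions of $n-\ell(k-1)$ into parts $\ge k$ vanishes once $\ell(k-1)>n$, which yields the upper limit $\lfloor n/\ell\rfloor+1$ — and correctly accounting for the boundary case $n=\ell(k-1)$, where the empty partition must be counted once. Once the $k=1$ telescoping and the $k\ge 2$ bijection are assembled, the displayed identity follows, and substituting it into $a_m(n)=2p(n)-p(n+1)-\sum_{j=2}^{m-1}b_j(n)$ gives the theorem; specializing to $m=2$ (empty inner sum, with $p(n-m)=p(n-2)$ cancelling the $-p(n-2)$ term) recovers Proposition~\ref{prop1} as the promised corollary.
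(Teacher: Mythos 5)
Your proposal is correct and follows essentially the same route as the paper: the paper likewise classifies partitions by the multiplicity and the value of the smallest part, deletes the copies of the smallest part, and telescopes the smallest-part-equal-to-$1$ contributions to $p(n-2)-p(n-m)$; your only organizational difference is subtracting $\sum_{j=2}^{m-1}b_j(n)$ from $a_2(n)=2p(n)-p(n+1)$ (citing Proposition \ref{prop1}) rather than computing $p(n)-a_m(n)$ directly via the same case analysis, which is the identical computation. One point in your favor: your reading of $\mathcal{Q}_{\ell,k}(n)$ as counting partitions of $n-\ell(k-1)$ with smallest part \emph{at least} $k$, with the empty partition counted when $n=\ell(k-1)$, is exactly the reading the paper's own proof uses in its Case 3 (despite the looser wording ``smallest part $k$'' in the theorem statement) and is in fact required for the formula to hold, as your treatment of the boundary case and of the upper limit $\lfloor n/\ell\rfloor+1$ makes explicit.
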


\begin{remark}
Clearly Proposition \ref{prop1} is a corollary of this result.
\end{remark}
\begin{remark}
All empty sums are taken to be $0$ and all empty products are taken to be $1$ in this paper.
\end{remark}
The second generalization is proved analytically, and is stated below. This generalization answers the question that Dhar \cite{dhar2021proofs} asked in his paper about a closed form generating function of $a_m(n)$. We use the standard notations
\[
(a)_n=(a;q)_n:=\prod_{i=0}^{n-1}(1-aq^i),
\]
and
\[
(a)_\infty=(a;q)_\infty:=\lim_{n\rightarrow\infty}(a;q)_n
\] in the remainder of this paper.

\begin{theorem}\label{thm:am}
For $n,m>0$, we have
\[\sum_{n=1}^{\infty}a_m(n)q^n=\frac{1}{(q)_\infty}\sum_{j=0}^{m-1}(-1)^jq^{\binom{j+1}{2}-mj}(q^{m-j})_j-(-1)^{m-1}q^{-\binom{m}{2}}(q)_{m-1}. \]
\end{theorem}
\noindent It is easy to see that Proposition \ref{prop1} follows from Theorem \ref{thm:am}. Two further easy corollaries are given below.
\begin{corollary}\label{thm:a3}
We have
\[
a_3(n)=3p(n)-p(n+1)-2p(n+2)+p(n+3),
\]
for all natural numbers $n$.
\end{corollary}

\begin{corollary}\label{thm:a4}
We have
\[a_4(n)=4p(n)-p(n+1)-2p(n+2)-2p(n+3)+p(n+4)+2p(n+5)-p(n+6),\]
for all natural numbers $n$.
\end{corollary}
\noindent We prove Theorm \ref{thm:am} in Section \ref{sec:genprop1}.

With regards to Proposition \ref{prop2}, if we denote by $a_{m}(n,k)$ the total number of partitions of $n$ where the smallest part occurs at least $m$ times and the difference between the largest and smallest parts is $k$, then the following result is obvious (if the smallest part is $k$ and the largest part is $n+k$ for a partition counted by $a_{m-1}(2n,n)$, then just remove the part $n+k$ and add a part of size $k$ to get a partition counted by $a_m(n)$).

\begin{proposition}\label{prop3}
For all natural number $n$ and $m\geq 2$ we have,
\[a_m(n)=a_{m-1}(2n,n).\]
\end{proposition}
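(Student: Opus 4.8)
The plan is to promote the parenthetical sketch above to a genuine bijection between the partitions counted by $a_{m-1}(2n,n)$ and those counted by $a_m(n)$. Denote by $\Phi$ the map the remark describes, and by $\Psi$ the reverse operation; I would verify that each is well defined and that they are mutually inverse.

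First I would pin down $\Phi$ on a partition $\mu$ counted by $a_{m-1}(2n,n)$. Let $k$ be its smallest part and $n+k$ its largest part (these differ since $n\geq1$, which is the only place this hypothesis is used). Define $\Phi(\mu)$ by deleting one copy of $n+k$ and inserting one part of size $k$. The result has size $2n-(n+k)+k=n$. Because at least $m-1$ copies of $k$ were already present and we add one more, the value $k$ occurs at least $m$ times, and since the deleted part $n+k$ was strictly the largest, no smaller part is created and $k$ remains the minimum. Hence $\Phi(\mu)$ is counted by $a_m(n)$.

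Next I would define $\Psi$ on a partition $\lambda$ counted by $a_m(n)$: if $k$ is its smallest part, occurring at least $m\geq2$ times, delete one copy of $k$ and insert a part of size $n+k$. Since $\lambda\vdash n$ its largest part is at most $n<n+k$, so the new part is the unique maximum; at least $m-1$ copies of $k$ survive and no smaller part appears, so $k$ stays the smallest. Thus $\Psi(\lambda)$ is a partition of $2n$ whose smallest part occurs at least $m-1$ times with largest-minus-smallest equal to $(n+k)-k=n$, i.e.\ it is counted by $a_{m-1}(2n,n)$.

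The only step needing genuine care is that $\Phi$ and $\Psi$ are inverse rather than merely well defined. This amounts to the multiset identities $\Psi(\Phi(\mu))=\mu$ and $\Phi(\Psi(\lambda))=\lambda$, each of which holds because one map reinserts exactly the part the other removed. The point to check carefully is the multiplicity bookkeeping at the value $k$: one must confirm that in each composition the reinserted (resp.\ removed) copy of $k$ does not change which value is the global minimum, and this is precisely guaranteed by the multiplicity bounds $m$ and $m-1$ recorded above. With those identities in hand the bijection yields $a_m(n)=a_{m-1}(2n,n)$.
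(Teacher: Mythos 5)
Your proof is correct and is precisely the argument the paper intends: the paper's entire ``proof'' of Proposition \ref{prop3} is the parenthetical remark (remove the largest part $n+k$, insert a part of size $k$), which you have formalized as the bijection $\Phi$ with inverse $\Psi$. Your multiplicity bookkeeping and the well-definedness checks are all sound, so this is the same approach, carried out in full detail.
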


\begin{remark}
Clearly $a_{1}(2n,n)=p(2n,n)$ and hence Proposition \ref{prop2} follows as a corollary of Proposition \ref{prop3}.
\end{remark}
\noindent In Section \ref{sec:and} we will find the generating function of $a_m(n,\ell)$ for $\ell>1$. Specializing $m=1$ will give us the main result of Andrews, Beck and Robbins \cite{andrews-pams}.

After the work of Andrews, Beck and Robbins \cite{andrews-pams}, other authors such as Chern \cite{chern}, Chern and Yee \cite{chern-yee} and Lin and Xheng \cite{lin-zheng} looked at similar results for other classes of partitions, such as overpartitions and $\ell$-regular partitions. In a similar spirit, all of the statistics defined in this section can also be suitably modified and defined for other classes of partitions, such as overpartitions, $\ell$-regular partitions, etc. We discuss these briefly towards the end of the paper. The paper is organized as follows: in Section \ref{sec:proof1} we prove Theorem \ref{G1}, in Section \ref{sec:genprop1} we prove Theorem \ref{thm:am}, in Section \ref{sec:and} we find the generating function of $a_m(n,\ell)$ for $\ell>1$, in Section \ref{sec:over} we look at analogous results for overpartitions and $\ell$-regular partitions, and finally we end the paper with some concluding remarks and questions in Section \ref{sec:final}.

\section{Proof of Theorem \ref{G1}}\label{sec:proof1}

We define the following sets
\[
P(n):=\{\lambda |\lambda~\text{is a partition of}~n\},
\]
and
\[
A_m(n):=\{\lambda|\lambda\in P(n), ~\text{the smallest part occurs at least}~m~\text{times} \}.
\]

Let us focus on the set $A_3(n)$ initially. The set $P(n)-A_3(n)$ contains all partitions of $n$ in which the least part occurs exactly once or twice. Now we have three cases.

\textbf{Case 1. (The least part occurs exactly once)} Here we have $p(n+1)-p(n)$ such partitions. Let $d(n)=p(n+1)-p(n)$ and $c(n)$ denote the number of partitions of $n$ where the least part occurs exactly once. Then, $d(n)$ is the number of partitions of $n+1$ which do not contain $1$ as a part because every partition of $n+1$ which contains $1$ as a part can be obtained by adjoining $1$ as a part to every partition of $n$. Let $\mathcal{C}_n$ and $\mathcal{D}_n$ be the sets which are enumerated by $c(n)$ and $d(n)$ respectively. 

Let $\pi\in \mathcal{C}_n$: if $1$ is a part of $\pi$ then as it is the smallest possible part, it occurs exactly once. We add $1$ to the part $1$ of such a partition and obtain a partition in $\mathcal{D}_n$ with $2$ as the smallest part; otherwise if $1$ is not a part of $\pi$ then adding $1$ to the smallest part we again obtain a partition in $\mathcal{D}_n$.

Let $\mu \in \mathcal{D}_n$: since $\mu$ doesn't contain $1$ as a part so the smallest part is $\geq 2$. If the smallest part of $\mu$ occurs exactly once then we subtract $1$ from that part to get a partition in $\mathcal{C}_n$, otherwise we subtract $1$ from one of the smallest parts to get a partition in $\mathcal{C}_n$.

The above establishes a bijection between the sets $\mathcal{C}_n$ and $\mathcal{D}_n$, $n\geq 1$ and we are done.

\textbf{Case 2. (The least parts are $1$ and $1$)} If we remove the 1's, then they become partitions of $n-2$ in which there is no 1 as a part. So, here we have $p(n-2)-p(n-3)$ partitions.

\textbf{Case 3. (The least parts are greater than $1$)} Let the least parts of a partition be $k-1$ and $k-1$, for some $k\geq 3$. If we remove both the parts, then the partition of $n$ becomes a partition of $n-2(k-1)$ with the least part greater than or equal to $k$. If the least part of such partitions of $n$ is $k-1$, then here we have $\mathcal{Q}_k(n)$ partitions of $n-2(k-1)$, where $\mathcal{Q}_k(n)$ is the number of partitions of $n-2(k-1)$ with lowest part at least $k$. Therefore,
\[p(n)-a_3(n)=p(n+1)-p(n)+p(n-2)-p(n-3)+\sum_{k=3}^{\big\lfloor\frac{n}{2}\big\rfloor+1}\mathcal{Q}_k(n).\]
Hence,
\[a_3(n)=2p(n)-p(n+1)-p(n-2)+p(n-3)-\sum_{k=3}^{\big\lfloor\frac{n}{2}\big\rfloor+1}\mathcal{Q}_k(n).\]

Shifting out focus to the set $A_4(n)$ we see that to compute $a_4(n)$ two more cases arise.

\textbf{Case 4. (The least parts are $1, 1$, and $1$)} Similar to Case 2, here we have $p(n-3)-p(n-4)$ partitions.

\textbf{Case 5. (There are exactly three least parts and they are greater than $1$)} Here we have $\mathcal{Q}_{3,k}(n)$ partitions, corresponding to the partitions of $n$ with the least parts $k-1(>1)$, where $\mathcal{Q}_{\ell,k}(n)$ denote the number of partitions of $n-\ell(k-1)$ with smallest part $k$. Therefore,
\begin{multline*}
    p(n)-a_4(n)=p(n+1)-p(n)+p(n-2)-p(n-3)+p(n-3)-p(n-4)\\+\sum_{k=3}^{\big\lfloor\frac{n}{2}\big\rfloor+1}\mathcal{Q}_k(n)+\sum_{k=3}^{\big\lfloor\frac{n}{3}\big\rfloor+1}\mathcal{Q}_{3,k}(n).
\end{multline*}
Hence,
\[a_4(n)=2p(n)-p(n+1)-p(n-2)+p(n-4)-\sum_{k=3}^{\big\lfloor\frac{n}{2}\big\rfloor+1}\mathcal{Q}_{2,k}(n)-\sum_{k=3}^{\big\lfloor\frac{n}{3}\big\rfloor+1}\mathcal{Q}_{3,k}(n).\]

Thus, we observe that to compute $a_m(n)$ we need to add two more cases in addition to the cases used for computing $a_{m-1}(n)$. The new cases for the computation of $a_m(n)$ are as follows.

\textbf{Case $2m-4$. (The least parts are $m-1$ repeated $1$'s)} Similar to Cases 2 and 4 above, here we have $p(n-m+1)-p(n-m)$ partitions.

\textbf{Case $2m-3$. (There are exactly $m-1$ least parts and they are greater than $1$)} Here we have $\mathcal{Q}_{m-1,k}(n)$ partitions, corresponding to the partitions of $n$ with the least parts $k-1 (>1)$. Therefore,
\begin{multline*}
    p(n)-a_m(n)=p(n+1)-p(n)+\sum_{i=2}^{m-1}\big(p(n-i)-p(n-i-1)\big)\\+\sum_{\ell=2}^{m-2}\sum_{k=3}^{\big\lfloor\frac{n}{3}\big\rfloor+1}\mathcal{Q}_{\ell,k}(n)+\sum_{k=3}^{\big\lfloor\frac{n}{3}\big\rfloor+1}\mathcal{Q}_{m-1,k}(n).
\end{multline*}
This immediately gives us Theorem \ref{G1}.

\section{Generating Function of \texorpdfstring{$a_m(n)$}{}}\label{sec:genprop1}

In this section we generalize Proposition \ref{prop1} by finding an explicit form of the generating function of $a_m(n)$, thereby proving Theorem \ref{thm:am}. Before proceeding further, we state three results which we will need in our analysis.

\begin{proposition}[Heine?s transformation, \cite{andrews1998theory}, p.19, Corollary 2.3]\label{Heine}
If $|q|,|t|,|b|<1$, then
\[\sum_{k=0}^{\infty}\frac{(a)_k(b)_kt^k}{(q)_k(c)_k}=\frac{(b)_\infty(at)_\infty}{(c)_\infty(t)_\infty}\sum_{k=0}^{\infty}\frac{(c/b)_k(t)_kb^k}{(q)_k(at)_k}.\]
\end{proposition}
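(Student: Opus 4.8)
The plan is to obtain Heine's transformation from Cauchy's identity \eqref{Cauchy} alone, applied twice, together with a single interchange in the order of summation. First I would rewrite the quotient $(b)_k/(c)_k$ appearing on the left-hand side in terms of infinite products. Since $(b)_k=(b)_\infty/(bq^k)_\infty$ and $(c)_k=(c)_\infty/(cq^k)_\infty$, we have
\[
\frac{(b)_k}{(c)_k}=\frac{(b)_\infty}{(c)_\infty}\cdot\frac{(cq^k)_\infty}{(bq^k)_\infty}.
\]
The purpose of this step is that the residual quotient $(cq^k)_\infty/(bq^k)_\infty$ is exactly of the form handled by \eqref{Cauchy}: writing $cq^k=(c/b)\cdot(bq^k)$ and invoking \eqref{Cauchy} with $a$ replaced by $c/b$ and $t$ replaced by $bq^k$ yields
\[
\frac{(cq^k)_\infty}{(bq^k)_\infty}=\sum_{n=0}^{\infty}\frac{(c/b)_n}{(q)_n}\,b^nq^{nk}.
\]

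Next I would substitute this expansion into the left-hand side and gather the factors carrying the exponent $k$. After interchanging the order of summation this produces the double sum
\[
\frac{(b)_\infty}{(c)_\infty}\sum_{n=0}^{\infty}\frac{(c/b)_n b^n}{(q)_n}\sum_{k=0}^{\infty}\frac{(a)_k}{(q)_k}\,(tq^n)^k .
\]
The inner sum over $k$ is once more an instance of Cauchy's identity, now with $t$ replaced by $tq^n$, so it equals $(atq^n)_\infty/(tq^n)_\infty$. Splitting the products as $(atq^n)_\infty=(at)_\infty/(at)_n$ and $(tq^n)_\infty=(t)_\infty/(t)_n$, the inner sum collapses to $\frac{(at)_\infty}{(t)_\infty}\cdot\frac{(t)_n}{(at)_n}$; pulling the $n$-free factors out front leaves precisely the right-hand side of the claimed identity.

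The one genuinely delicate point, and the step I expect to be the main obstacle, is justifying the interchange of the two summations. This is exactly where the hypotheses $|q|,|t|,|b|<1$ enter. Under these conditions the $q$-Pochhammer factors $(a)_k$ and $(c/b)_n$ are uniformly bounded in modulus (their absolute values are dominated by the convergent products $\prod_{i\ge0}(1+|a||q|^i)$ and $\prod_{i\ge0}(1+|c/b||q|^i)$), while $1/|(q)_k|$ and $1/|(q)_n|$ are bounded below away from zero by $\prod_{i\ge1}(1-|q|^i)>0$; hence the general term of the double series is dominated by a constant multiple of $|t|^k|b|^n$, and the extra factor $q^{nk}$ only improves the decay. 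The resulting majorant $\sum_{k,n}C|t|^k|b|^n$ converges, so Fubini's theorem for absolutely convergent double series licenses the rearrangement. Once this is secured, the remaining manipulations are the purely formal product identities recorded above and require no further analytic input.
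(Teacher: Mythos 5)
Your proof is correct: the paper does not prove this proposition at all (it is quoted from Andrews' book, the cited Corollary 2.3), and your argument --- expanding $(b)_k/(c)_k$ as $\frac{(b)_\infty}{(c)_\infty}\cdot\frac{(cq^k)_\infty}{(bq^k)_\infty}$, applying Cauchy's identity \eqref{Cauchy} twice, and interchanging the absolutely convergent double sum --- is precisely the classical proof given in that reference. One trivial wording slip: it is $|(q)_k|$ that is bounded \emph{below} by $\prod_{i\geq 1}(1-|q|^i)>0$, hence $1/|(q)_k|$ is bounded \emph{above}, which is what your domination argument actually uses.
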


\begin{proposition}[$q$-binomial theorem, \cite{andrews1998theory}, p. 36]\label{q-binom}
    We have
    \[
(z)_n=\sum_{j=0}^n\binom{n}{j}_q(-1)^jz^jq^{j(j-1)/2},
\]
where
\[
\binom{a}{b}_q:=\frac{(q)_a}{(q)_b(q)_{a-b}}.
\]
\end{proposition}

\begin{proposition}[Cauchy?s identity, \cite{andrews1998theory}, p.17, Theorem 2.1]
If $|q|<1,|t|<1$, then
\begin{equation}\label{Cauchy}
    \sum_{k=0}^{\infty}\frac{(a)_kt^k}{(q)_k}=\frac{(at)_\infty}{(t)_\infty}.
\end{equation}
\end{proposition}
\noindent By replacing $a=0$ in \eqref{Cauchy}, we get
\begin{equation}\label{eq:cor-cauchy}
    \sum\limits_{k=0}^{\infty}\dfrac{t^k}{(q)_k}=\dfrac{1}{(t)_\infty}.
\end{equation}

The generating function of $a_m(n)$ is given by
\begin{align*}
\sum_{n=1}^{\infty} a_m(n)q^n &= \sum_{k=1}^{\infty} q^{\underbrace{k+k+\cdots+k}_{m}}(1+q^k+q^{2k}+\cdots)(1+q^{k+1}+q^{2(k+1)}+\cdots)\cdots\\ \nonumber
&=\sum_{k=1}^\infty\frac{q^{mk}}{(q^k)_\infty}\\&=\frac{1}{(q)_\infty}\sum_{k=1}^\infty(q)_{k-1}q^{mk}\\&=\frac{q^m}{(q)_\infty}\sum_{k=0}^{\infty}(q)_{k}q^{mk}\\&=\frac{q^m}{(q)_\infty}\sum_{k=0}^{\infty}\frac{(q)_{k}(q)_{k}q^{mk}}{(q)_{k}}\\ \nonumber
&=\frac{q^m(q^{m+1})_\infty}{(q^m)_\infty}\sum_{k=0}^{\infty}\frac{(q^m)_{k}q^k}{(q)_{k}(q^{m+1})_{k}}\\ \nonumber
& \hspace{20pt} (\text{by replacing} \ a=q,b=q,c=0, \ \text{and} \ t=q^m \ \text{in Proposition \ref{Heine}})\\ \nonumber
&=\sum_{k=0}^{\infty}\frac{q^{k+m}}{(q)_{k}(1-q^{k+m})}.
\end{align*}

We are now ready to prove Theorem \ref{thm:am}.
\begin{proof}[Proof of Theorem \ref{thm:am}]
From the above discussion, we have
\begin{align*}
\sum_{n=1}^{\infty} a_m(n)q^n &= \sum_{k=0}^{\infty}\frac{q^{k+m}}{(q)_{k}(1-q^{k+m})}\\&=\sum_{k=0}^\infty \frac{q^{k+m}(q^{k+1})_{m-1}}{(q)_{k+m}}\\
&=\sum_{k=0}^\infty \frac{q^{k}(q^{k-m+1})_{m-1}}{(q)_{k}}-\sum_{k=0}^{m-1} \frac{q^{k}(q^{k-m+1})_{m-1}}{(q)_{k}}.
\end{align*}
\noindent Applying Proposition \ref{q-binom} to $(q^{k-m+1})_{m-1}$ in the first term above and simplifying the second term above, we obtain the following.
\begin{align*}
\sum_{n=1}^{\infty} a_m(n)q^n &= \sum_{k=0}^\infty\frac{q^k}{(q)_k}\sum_{j=0}^{m-1}\binom{m-1}{j}_q(-1)^jq^{(k-m+1)j}q^{j(j-1)/2}-(q^{-m+1})_{m-1}\\
&=\sum_{k=0}^\infty\frac{q^k}{(q)_k}\sum_{j=0}^{m-1}(-1)^jq^{\binom{j+1}{2}-mj}\frac{q^{kj}(q)_{m-1}}{(q)_j(q)_{m-1-j}}-(-1)^{m-1}q^{-\binom{m}{2}}(q)_{m-1}.
\end{align*}
\noindent Using equation \eqref{eq:cor-cauchy} and simplifying further, we obtain the following.
\begin{align*}
\sum_{n=1}^{\infty} a_m(n)q^n &= \frac{1}{(q)_\infty}\sum_{j=0}^{m-1}(-1)^jq^{\binom{j+1}{2}-mj}(q^{m-j})_j-(-1)^{m-1}q^{-\binom{m}{2}}(q)_{m-1}.
\end{align*}
\end{proof}

Corollaries \ref{thm:a3} and \ref{thm:a4} now follow from Theorem \ref{thm:am} by replacing $m=3$ and $m=4$ respectively. We leave the details to the reader.

\section{Generating Functions of \texorpdfstring{$a_m(n,\ell)$}{}}\label{sec:and}

We follow the method used by Andrews, Beck and Robbins \cite{andrews-pams} to find the generating function of $a_m(n, \ell)$ below.

\begin{theorem}\label{thm:and}
    For $\ell>1$ we have
    \begin{multline*}
        \sum_{n=1}^{\infty} a_m(n, \ell)q^n =\frac{q^{\ell+m+1}(q)_{m}(q)_{\ell-m+1}}{((q)_\ell)^2}(-1)^{-m-1}q^{(-m^2-3m-2)/2}\\ \times \left((q)_{\ell}- \sum_{j=0}^{m}\binom{\ell}{j}_q (-1)^{j}q^{j+j(j-1)/2}\right).
    \end{multline*}
\end{theorem}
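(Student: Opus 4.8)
The plan is to follow the Andrews--Beck--Robbins strategy from \cite{andrews-pams}: build the generating function by conditioning on the smallest part, reduce the resulting double sum by a partial-fraction expansion, and then recognise what survives as a truncated $q$-binomial series. First I would classify the partitions counted by $a_m(n,\ell)$ according to their smallest part $s\geq1$. Since $s$ occurs at least $m$ times, the largest part is forced to be $s+\ell$ and must occur at least once, while the intermediate parts $s+1,\dots,s+\ell-1$ are unrestricted; hence the contribution of a fixed $s$ is $\frac{q^{ms}}{1-q^s}\cdot\frac{q^{s+\ell}}{1-q^{s+\ell}}\cdot\prod_{i=1}^{\ell-1}\frac{1}{1-q^{s+i}}$, and summing over $s$ gives $\sum_{n\geq1}a_m(n,\ell)q^n=\sum_{s\geq1}\frac{q^{(m+1)s+\ell}}{(q^s;q)_{\ell+1}}$. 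Putting $m=1$ here recovers exactly the starting point of \cite{andrews-pams}, which I would keep as a running sanity check.

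Next I would expand $\frac{1}{(q^s;q)_{\ell+1}}=\prod_{i=0}^{\ell}(1-q^{s+i})^{-1}$ by partial fractions in the variable $x=q^{s}$. A residue computation, simplified using $1-q^{-r}=-q^{-r}(1-q^r)$, yields $\frac{1}{(q^s;q)_{\ell+1}}=\sum_{i=0}^{\ell}\frac{(-1)^i q^{i(i+1)/2}}{(q)_i(q)_{\ell-i}}\cdot\frac{1}{1-q^{s+i}}$. Substituting this and interchanging the order of summation, each inner sum $\sum_{s\geq1}\frac{q^{(m+1)s}}{1-q^{s+i}}$ is geometric; after the shift $u=s+i$ it splits into one universal divisor-type (Lambert) series $\sum_{u\geq1}\frac{q^{(m+1)u}}{1-q^u}$ together with finitely many rational correction terms.

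The decisive step is that the coefficient multiplying the non-rational Lambert series equals $\frac{1}{(q)_\ell}\sum_{i=0}^\ell\binom{\ell}{i}_q(-1)^i q^{i(i-1)/2}(q^{-m})^i=\frac{(q^{-m};q)_\ell}{(q)_\ell}$ by the finite $q$-binomial theorem. This vanishes exactly when some factor $1-q^{i-m}$ with $0\le i\le \ell-1$ is zero, i.e. as soon as $\ell>m$; for the case $m=1$ emphasised in the paper this is precisely the hypothesis $\ell>1$. In this range the Lambert series drops out, leaving a genuine finite sum of rational functions of $q$, consistent with the fact that fixing the difference confines the parts to a window of bounded width.

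Finally I would put that finite sum over a common denominator and match the outcome with the prefactor $\frac{q^{\ell+m+1}(q)_m(q)_{\ell-m+1}}{((q)_\ell)^2}(-1)^{-m-1}q^{(-m^2-3m-2)/2}$ displayed in the statement, identifying its remaining factor as the truncated $q$-binomial sum $\sum_{j=m+1}^{\ell}\binom{\ell}{j}_q(-1)^j q^{j(j+1)/2}$. Using the evaluation $\sum_{j=0}^{\ell}\binom{\ell}{j}_q(-1)^j q^{j(j+1)/2}=(q)_\ell$ (the $q$-binomial theorem at $x=q$), this truncation rewrites as $(q)_\ell-\sum_{j=0}^{m}\binom{\ell}{j}_q(-1)^j q^{j+j(j-1)/2}$, which is exactly the bracketed factor in the claim. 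The main obstacle is precisely this last bookkeeping: tracking the finitely many correction terms through the interchange of summation and collapsing them into the single truncated $q$-binomial sum with the correct power of $q$ and sign. I would guard against exponent and sign slips by testing the result against $m=1,\ell=2$ (where it must reproduce the Andrews--Beck--Robbins generating function) and against the explicit data underlying Theorems \ref{thm:a3} and \ref{thm:a4}.
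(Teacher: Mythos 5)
Your route is genuinely different from the paper's. The paper writes the generating function as $\frac{q^{\ell+m+1}}{(q)_{\ell+1}}\sum_{k\ge0}\frac{(q)_k(q)_kq^{(m+1)k}}{(q)_k(q^{\ell+2})_k}$ and applies the transformation from Andrews' book (p.~38); the transformed series carries the factor $(q^{m+1-\ell})_j$, hence terminates at $j=\ell-m-1$, and after re-indexing its terms are exactly the tail $\sum_{j=m+1}^{\ell}\binom{\ell}{j}_q(-1)^jq^{j(j+1)/2}$ of the $q$-binomial expansion of $(q)_\ell$, so the closed form falls out at once. You instead expand $1/(q^s;q)_{\ell+1}$ by partial fractions and cancel a Lambert series. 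Your completed steps are correct: the coefficients $(-1)^iq^{i(i+1)/2}/((q)_i(q)_{\ell-i})$ are right (though the inner sums $\sum_{s\ge1}q^{(m+1)s}/(1-q^{s+i})$ are Lambert-type rather than geometric), and the coefficient multiplying the Lambert series is indeed $q^\ell(q^{-m};q)_\ell/(q)_\ell$, which vanishes precisely when $\ell>m$. This also exposes the honest hypothesis: the theorem really requires $\ell>m$, not merely $\ell>1$; the paper's own proof needs this too (its series terminates only for $\ell\ge m+1$), and the stated formula yields $0$ for all $\ell\le m$, which is false since, e.g., $a_2(5,2)=1$.

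The gap is your final step, which you call bookkeeping but which is actually the crux. After the Lambert series cancels, what remains is $-q^\ell\sum_{i=0}^{\ell}\frac{(-1)^iq^{i(i+1)/2-(m+1)i}}{(q)_i(q)_{\ell-i}}\sum_{u=1}^{i}\frac{q^{(m+1)u}}{1-q^u}$. Interchanging the sums produces the quantities $\sum_{i=0}^{u-1}\binom{\ell}{i}_q(-1)^iq^{i(i-1)/2}q^{-mi}$, i.e.\ \emph{partial} sums of the $q$-binomial expansion of $(q^{-m};q)_\ell$, and these have no closed form individually; ``putting over a common denominator and matching'' is a finite verification for any fixed pair $(m,\ell)$ but not a proof for general $m,\ell$. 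To collapse the remainder into the single truncated sum of the theorem you need genuinely more work, for instance: expand $1/(1-q^u)$ geometrically and apply the $q$-binomial theorem a second time to rewrite the remainder as $\frac{q^{\ell}}{(q)_\ell}\sum_{r\ge0}\frac{q^{m+1+r}}{1-q^{m+1+r}}(q^{r+1};q)_\ell$, note that for $\ell>m$ the factor $1-q^{r+m+1}$ inside $(q^{r+1};q)_\ell$ cancels the denominator, then expand the leftover product $(q^{r+1};q)_m(q^{r+m+2};q)_{\ell-m-1}$ and resum the geometric series in $r$. None of this is in your outline, and it is exactly where the truncated $q$-binomial sum gets manufactured; in the paper's route this step is automatic because the transformation delivers the tail directly. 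One further caution: a faithful match against the printed statement would fail in any case, because its prefactor contains a typo --- $(q)_{\ell-m+1}$ should be $(q)_{\ell-m-1}$ (test $m=1$, $\ell=2$: the printed formula gives $q^4/(1-q^2)$, whereas $\sum_{n}p(n,2)q^n=q^4/\left((1-q)(1-q^2)^2\right)$).
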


\begin{proof}
Clearly the generating function of $a_m(n,\ell)$ is given by
\begin{align*}
\sum_{n=1}^{\infty} a_m(n, \ell)q^n &= \sum_{k=1}^{\infty} q^{\underbrace{k+k+\cdots+k}_{m}} q^{k+\ell} \prod_{i=0}^\ell (1+q^{k+i}+q^{2(k+i)}+\cdots)\\ 
&=q^\ell \sum_{k\geq 1}\frac{q^{(m+1)k}(q)_{k-1}}{(q)_{k+\ell}}= q^{\ell+m+1} \sum_{k\geq 0}\frac{q^{(m+1)k}(q)_k}{(q)_{k+\ell+1}}\\ 
&=\frac{q^{\ell+m+1}}{(q)_{\ell+1}}\sum_{k\geq 0}\frac{(q)_k(q)_kq^{(m+1)k}}{(q)_k(q^{\ell+2})_k}.
\end{align*}
We now use the following transformation \cite[p. 38]{andrews1998theory}
\[
\sum_{k\geq 0}\frac{(a)_k(b)_kz^k}{(q)_k(c)_k}=\frac{(c/b)_\infty (bz)_\infty}{(c)_\infty (z)_\infty}\sum_{j\geq 0}\frac{(abz/c)_j(b)_j(c/b)^j}{(q)_j(bz)_j},
\]
to get
\begin{align*}
\sum_{n=1}^{\infty} a_m(n, \ell)q^n &= \frac{q^{\ell+m+1}(q^{\ell+1})_\infty (q^{m+2})_\infty}{(q)_{\ell+1}(q^{\ell+2})_\infty (q^{m+1})_\infty} \sum_{j\geq 0}\frac{(q^{m+1-\ell})_j(q)_jq^{(\ell+1)j}}{(q)_j(q^{m+2})_j}\\
&=\frac{q^{\ell+m+1}}{(q)_\ell}\sum_{j=0}^{\ell-m-1}\frac{(q^{m+1-\ell})_jq^{(\ell+1)j}}{(q^{m+1})_{j+1}}\\
&=\frac{q^{\ell+m+1}}{(q)_\ell}\\& \times \sum_{j=0}^{\ell-m-1}\frac{(1-q^{\ell-m-1})(1-q^{\ell-m-2})\cdots (1-q^{\ell-m-j})(-1)^jq^{(m+1)j+\binom{j+1}{2}}}{(q^{m+1})_{j+1}}\\
&=\frac{q^{\ell+m+1}(1-q)\cdots (1-q^m)}{(1-q^{\ell-m})\cdots (1-q^\ell)}\sum_{j=0}^{\ell-m-1}\frac{(-1)^jq^{(m+1)j+\binom{j+1}{2}}}{(q)_{m+j+1}(q)_{\ell-m-j-1}}\\
&=\frac{q^{\ell+m+1}(q)_{m}(q)_{\ell-m+1}}{(q)_\ell(q)_\ell}\sum_{j=0}^{\ell-m-1}\binom{\ell}{m+j+1}_q (-1)^jq^{(m+1)j+\binom{j+1}{2}} ,
\end{align*}
\noindent We use Proposition \ref{q-binom} to get
\begin{align*}
\sum_{n=1}^{\infty} a_m(n, \ell)q^n &=\frac{q^{\ell+m+1}(q)_{m}(q)_{\ell-m+1}}{((q)_\ell)^2}\sum_{j=m+1}^{\ell}\binom{\ell}{j}_q (-1)^{j-m-1}q^{(m+1)(j-m-1)+\binom{j-m}{2}} \\
&=\frac{q^{\ell+m+1}(q)_{m}(q)_{\ell-m+1}}{((q)_\ell)^2}(-1)^{-m-1}q^{(-m^2-3m-2)/2}\\& \quad \times \sum_{j=m+1}^{\ell}\binom{\ell}{j}_q (-1)^{j}q^{j+j(j-1)/2}\\
&=\frac{q^{\ell+m+1}(q)_{m}(q)_{\ell-m+1}}{((q)_\ell)^2}(-1)^{-m-1}q^{(-m^2-3m-2)/2}\\
&\qquad \times \left((q)_{\ell}- \sum_{j=0}^{m}\binom{\ell}{j}_q (-1)^{j}q^{j+j(j-1)/2}\right).
\end{align*}
\end{proof}

If we replace $m=1$ in Theorem \ref{thm:and} we get back the result of Andrews, Beck and Robbins \cite[Theorem 1]{andrews-pams}.

\section{Other Classes of Partitions}\label{sec:over}

In this section we briefly look at overpartitions and $\ell$-regular partitions and prove results analogous to those stated in Section \ref{sec:intro}.

\subsection{Overpatitions}

Overpartitions of $n$ are the partitions of $n$ in which the first occurrence (equivalently, the last occurrence) of a part may be overlined. The number of overpartitions of $n$ are denoted by $\bar p(n)$. For example,  $\bar{p}(3)=8$, and the overpartitions of 3 are
\[3,\bar{3},2+1,\bar{2}+1,2+\bar{1},\bar{2}+\bar{1},1+1+1,\bar{1}+1+1.\]
We have the generating function
\[\sum_{n=0}^{\infty}\bar{p}(n)q^n = \prod_{n=1}^{\infty}\frac{1+q^n}{1-q^n}=1 + 2q + 4q^2 + 8q^3 + 14q^4 +\cdots.\]

We define $\bar{a}_m(n)$ to be the number of overpartitions of $n$ where the smallest part occurs at least $m$ times. We get the generating function of $\bar{a}_m(n)$ as follows
\[\sum_{n=1}^{\infty} \bar{a}_m(n)q^n=\sum_{k=1}^{\infty} \dfrac{2q^{mk}(-q^k)_\infty}{(1+q^k)(q^k)_\infty}.\]
Here, an overlined part is equal to a non-overlined part if their value is equal. For example $\bar{1}=1$. The terms $q^{mk}$, $(-q^k)_\infty$ and $(q^k)_\infty$ are similarly explained as in finding the generating function for $a_m(n)$. We have the factor of $2$ because the smallest part can be either overlined or non-overlined and finally we need to divide by $(1+q^k)$ because if $k$ is the smallest part then it would not be generated by $(-q^k)_\infty$.

Analogous to Proposition \ref{prop1}, we now have the following result.

\begin{theorem}\label{over:a2}
For all $n\geq 1$, we have
\[\bar{a}_2(n)=2\bar{p}(n)-\bar{p}(n+1)+\bar{u}(n+1),\]
where $\bar{u}(n)$ is the number of overpartitions of $n$ with the following conditions:
\begin{enumerate}
    \item $1$ is not a part.
    \item Smallest part must be overlined, and no other part of that value is present. (For example, $7+7+\bar{2}$ is included, but $7+7+\bar{2}+2$ is not.)
    \item The value of the first and second greatest parts are equal or consecutive, but if they are consecutive then the second greatest part must be overlined.  (For example, $7+\bar{6}+\bar{2}$ is included, but $7+6+\bar{2}$ is not.)
\end{enumerate}
\end{theorem}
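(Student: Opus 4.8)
My plan is to follow the combinatorial template used for Proposition~\ref{prop1} and Theorem~\ref{G1}, reducing everything to the overpartitions in which the smallest value occurs exactly once. Writing $E(n)$ for the number of overpartitions of $n$ whose smallest value appears exactly once, every overpartition has its smallest value occurring either once or at least twice, so $\bar a_2(n)=\bar p(n)-E(n)$. Hence the asserted identity is equivalent to
\[
\bar p(n+1)-\bar p(n)=E(n)+\bar u(n+1),
\]
and it is this reformulation I would prove. One could instead start from the displayed generating function for $\bar a_m(n)$ with $m=2$ and grind through a $q$-series reduction, but the combinatorial route is what makes the meaning of the three conditions defining $\bar u$ transparent, so I would lead with it.

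The first step is to read the left-hand side combinatorially. Adjoining a single non-overlined part equal to $1$ is a bijection between overpartitions of $n$ and overpartitions of $n+1$ containing at least one non-overlined $1$; consequently $\bar p(n+1)-\bar p(n)$ counts exactly the overpartitions of $n+1$ having no non-overlined $1$, i.e. those whose only possible part of value $1$ is $\bar 1$. Call this set $\mathcal D(n+1)$. On $\mathcal D(n+1)$ I would define the natural lowering map that subtracts $1$ from a smallest part, choosing a non-overlined copy when one exists and preserving the overline otherwise. When the smallest value is at least $2$ this returns an overpartition of $n$ whose smallest value is attained exactly once, i.e. an element counted by $E(n)$; tracking the overline carefully shows that this restricts to a bijection between the members of $\mathcal D(n+1)$ with smallest value $\ge 2$ and all but a controlled family of the overpartitions counted by $E(n)$.

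Two mismatches then remain, and together they must produce the correction term $\bar u(n+1)$. First, the members of $\mathcal D(n+1)$ whose smallest part is $\bar 1$ cannot be lowered and must be set aside; these are exactly the overpartitions $\bar 1+\nu$ with $\nu$ having all parts $\ge 2$. Second, the lowering map misses precisely those $E(n)$-overpartitions whose unique smallest part is overlined, say $\bar s$, and which already contain a part of value $s+1$: such an overpartition has no preimage, because raising $\bar s$ to $\overline{s+1}$ would create two overlined parts of the same value. I would then show that reconciling these two defect families is equivalent to a single bijection, namely that the overpartitions described by conditions (1)--(3) together with the unmatched overlined-minimal family are in bijection with the set $\mathcal D(n+1)$-with-smallest-$\bar 1$. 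Condition (2) records that the surviving smallest part is overlined and unique, while condition (3) is exactly the constraint that survives once the matching at the top of the partition is made.

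The main obstacle is this last bijection: proving that the net discrepancy between the two defect families is enumerated precisely by the overpartitions satisfying (1)--(3). All of the overline bookkeeping is concentrated here, and I expect it to need an explicit, case-checked matching—for example a move that lowers an overlined smallest part $\bar s$ to $\bar 1$ while raising the largest part to absorb the excess $s-1$, together with a separate size-shifting map handling the unmatched overlined-minimal overpartitions—rather than a one-line argument. As a consistency check along the way I would verify the reformulated identity against small values such as $\bar u(3)=2$, $\bar u(4)=0$, and $\bar u(5)=4$, which already display both defect families and pin down the correct reading of conditions (1)--(3).
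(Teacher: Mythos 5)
Your reduction to $\bar{p}(n+1)-\bar{p}(n)=E(n)+\bar{u}(n+1)$ is the same opening move as the paper's, and the pieces you do work out are sound: your set $\mathcal{D}(n+1)$ is correctly identified as the overpartitions of $n+1$ with no non-overlined $1$, your two defect families are the right ones for your lowering map, and your check values $\bar{u}(3)=2$, $\bar{u}(4)=0$, $\bar{u}(5)=4$ agree with the theorem (and correctly pin down that condition (1) must exclude only a non-overlined $1$, since the relevant sets contain $2+\bar{1}$ and $\bar{2}+\bar{1}$). But there is a genuine gap exactly where you flag it: you never construct the final bijection showing that the net discrepancy between the family $\{\bar{1}+\nu:\nu\ \text{has all parts}\geq 2\}$ and the family of $E(n)$-overpartitions with overlined unique smallest part $\bar{t}$ and a part of value $t+1$ is enumerated by conditions (1)--(3). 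That matching is the entire content of the theorem, not a routine check: it must splice together an identity embedding (the $\bar{u}$-overpartitions with smallest part $\bar{1}$ sit verbatim in the first family), your proposed move lowering $\bar{s}$ to $\bar{1}$ while raising the largest part by $s-1$ (for $\bar{u}$-overpartitions with smallest part $\bar{s}$, $s\geq 2$), and a separate size-shifting injection of the second defect family (objects of size $n$ into objects of size $n+1$), and then verify that these images are disjoint and exhaust the first family --- a case analysis on the overlines at the top of the partition that you explicitly defer. As submitted, the proposal is a correct and plausible plan, not a proof.

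The comparison with the paper is instructive, because the paper avoids your residual bijection altogether by not using a uniform map. It maps the $E(n)$-objects \emph{into} $\mathcal{D}(n+1)$ with an overline-sensitive rule: add $1$ to the smallest part when that part is non-overlined, but add $1$ to the \emph{largest} part when the smallest part is overlined. With this choice the uncovered part of $\mathcal{D}(n+1)$ is directly the set described by conditions (1)--(3) --- smallest part overlined and unique in its value; top two parts equal in value, or consecutive with the second overlined (this is precisely where invertibility of the add-one-to-the-largest move fails) --- so the defect set is read off rather than reconciled. Your single lowering map is what creates the two mismatched families and hence the extra unproven bijection; if you want to salvage your route, the cleanest fix is to replace your rule in the overlined-smallest case by the inverse of the paper's largest-part move, which collapses your two defect families into one set that manifestly satisfies (1)--(3).
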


\begin{proof}
Clearly $\bar{p}(n)-\bar{a}_2(n)$ counts the overpartitions of $n$ where the least part occurs exactly once. And $\bar{p}(n+1)-\bar{p}(n)$ counts the overpartitions of $n+1$ where there is no 1 as a part. Let us denote the sets by $\bar{A}$ and $\bar{B}$, respectively, whose cardinalities are counted by $\bar p(n)-\bar a_2(n)$ and $\bar p(n+1)-\bar p(n)$ respectively.

We divide $\bar{A}$ into two classes:
\begin{enumerate}
    \item[(a)] \textbf{The least part is not overlined.} In this case we add 1 to the least part. Then for each of these overpatitions there corresponds a unique overpartition in $\bar{B}$. The least part of these overpartitions in $\bar{B}$ are not overlined. Conversely, for each overpartition in $\bar{B}$ with the least part not overlined, if we subtract 1 form the least part, then there corresponds a unique overpartition in $\bar{A}$. Let us denote this class of $\bar{B}$ by $\bar{B}_1$.
    \item[(b)] \textbf{The least part is overlined.} In this case we add 1 to the largest part. Then for each of these overpatitions there corresponds a unique overpartition in $\bar{B}$. Let us denote this class of $\bar{B}$ by $\bar{B}_2$. Conversely, we subtract 1 from the largest part of the overpatitions in $\bar{B}_2$.
\end{enumerate}
So, in $\bar{B}-\bar{B}_1\bigcup \bar{B}_2$, we are left with the overpartitions, where,
\begin{enumerate}
    \item the least part is overlined,
    \item the first and second largest parts are equal, or they are consecutive with the property that the second largest part is overlined. (Note that, the later case is necessary because of the converse part of the above Case (b). Since, if we subtract 1 from the largest part, then the overpartitions become of the form $a+\bar{a}+\cdots$ or $\bar{a}+\bar{a}+\cdots$.)
\end{enumerate}
Therefore, $|\bar{B}-\bar{B}_1\bigcup \bar{B}_2|=\bar{u}(n+1)$.
Hence, $\bar{p}(n)-\bar{a}_2(n)=\bar{p}(n+1)-\bar{p}(n)-\bar{u}(n+1)$.
\end{proof}

For the sake of completeness, the generating function for $\bar u(n)$ is
\begin{align*}
\sum_{n=1}^{\infty} \bar{u}(n)q^n =& \sum_{k=1}^{\infty} \sum_{t=1}^{\infty} \frac{q^k(1+q^{k+1})(1+q^{k+2})\cdots(1+q^{k+t-1})(q^{2(k+t)}+q^{2(k+t)})}{(1-q^{k+1})(1-q^{k+2})\cdots(1-q^{k+t-1})(1-q^{k+t})}\\
& + \sum_{k=1}^{\infty} \sum_{t=2}^{\infty} \frac{q^k(1+q^{k+1})(1+q^{k+2})\cdots(1+q^{k+t-2})q^{k+t-1}(q^{k+t}+q^{k+t})}{(1-q^{k+1})(1-q^{k+2})\cdots(1-q^{k+t-1})}\\
& + \sum_{k=1}^{\infty}q^k(q^{k+1}+q^{k+1})\\
=& \sum_{k=1}^{\infty} \sum_{t=1}^{\infty} \frac{2q^{3k+2t}(-q^{k+1};q)_{t-1}}{(q^{k+1};q)_{t}} + \sum_{k=1}^{\infty} \sum_{t=2}^{\infty} \frac{2q^{3k+2t-1}(-q^{k+1};q)_{t-2}}{(q^{k+1};q)_{t-1}} + \sum_{k=1}^{\infty}2q^{2k+1}\\
=& 2\sum_{k=1}^{\infty}\bigg( \frac{q^{2k+1}}{1-q^{k+1}} + \sum_{t=2}^{\infty} q^{3k+2t-1}(1+q)\frac{(-q^{k+1};q)_{t-2}}{(q^{k+1};q)_{t}} \bigg).
\end{align*}

Let us now define $\bar p(n, t)$ to be the number of overpartitions of $n$ where the difference between the largest and smallest parts equal $t$. Analogous to Proposition \ref{prop2} we have the following result.
\begin{theorem}\label{thm:over-1}
For all $n\geq 1$, we have
\[2\bar{a}_2(n)=\bar{p}(2n,n).\]
\end{theorem}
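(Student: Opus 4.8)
The plan is to establish a bijective-type identity of the same flavour as Proposition \ref{prop3}, but now in the overpartition setting, relating overpartitions where the smallest part occurs at least twice with overpartitions having a prescribed difference between largest and smallest parts. First I would set up the two sides explicitly: the left side $2\bar a_2(n)$ counts overpartitions of $n$ whose smallest part occurs at least twice, weighted by a factor of $2$, while the right side $\bar p(2n,n)$ counts overpartitions of $2n$ in which the difference between the largest and smallest parts is exactly $n$. The factor of $2$ on the left is the crucial structural clue: it should account for the two choices (overlined or non-overlined) of some distinguished part created or destroyed under the correspondence, exactly as the factor of $2$ appeared in the generating function $\sum \bar a_m(n)q^n$ when the smallest part could be either overlined or non-overlined.

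The natural approach is to mimic the map behind Proposition \ref{prop3}. Given an overpartition counted by $\bar a_2(n)$ with smallest part $k$ occurring at least twice, I would add a part of size $k+n$ as a new largest part (or, read in reverse, remove the largest part and insert a copy of the smallest part). The newly added largest part $k+n$ then sits at difference exactly $(k+n)-k=n$ above the smallest part $k$, and the total becomes $n+(k+n)=2n+k$; to land in $\bar p(2n,n)$ one must instead delete one copy of the smallest part, so that the total reads $n-k+(k+n)=2n$ while the difference is $(k+n)-k=n$. Thus the core map sends an overpartition of $n$ with smallest part $k$ occurring at least twice to an overpartition of $2n$ whose largest part is $k+n$ and whose smallest part is $k$, by removing one copy of $k$ and adjoining the part $k+n$. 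I would then verify this is a bijection onto its image and explain how overlining of the adjoined largest part contributes the factor of $2$: the largest part $k+n$ may be overlined or not, doubling the count and producing $2\bar a_2(n)$ on the left.

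The key steps in order are: (i) confirm that every overpartition counted by $\bar p(2n,n)$ has its largest part equal to its smallest part plus $n$, and that removing the largest part returns an overpartition of $n-(\text{something})$ with controlled smallest part; (ii) check that after deleting the top part, the remaining overpartition has smallest part occurring at least twice, so that re-inserting a copy recovers the $\bar a_2$ side; and (iii) track the overlining data carefully, showing that the overline on the largest part is a free binary choice independent of the rest, which is precisely what yields the factor $2$. I would close by confirming both the total and the difference are preserved so that the map is a weight-preserving bijection.

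The hard part will be step (iii), the bookkeeping of overlines, because overpartitions admit overlining only on the first occurrence of each value, so inserting or deleting a copy of the smallest part $k$ can interact with whether an existing copy of $k$ was overlined; I expect the main obstacle to be verifying that the correspondence remains a clean bijection when the smallest part $k$ coincides in value with parts elsewhere, and that the doubling is genuinely free rather than already absorbed by the overpartition structure of the smaller parts.
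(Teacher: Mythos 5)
Your proposal is correct and is essentially the paper's own proof: your map (delete one copy of the smallest part $k$ and adjoin $k+n$) is exactly the paper's operation of adding $n$ to the rightmost smallest part, and the factor of $2$ arises in both arguments from the free choice of overlining the new largest part $k+n$, which is strictly greater than all other parts. The overline bookkeeping you flag as the hard part is resolved just as you suspect, by always operating on the rightmost (non-overlined) copy of $k$, so that the overline status of the value $k$ is untouched and the correspondence is a clean one-to-two map.
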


\begin{proof} Following a similar method as Dhar \cite{dhar2021proofs}, here we get a one-to-two correspondence between the overpartitions counted by $\bar{a}_2(n)$ and the overpatitions counted by $\bar{p}(2n,n)$.

We add $n$ to the rightmost smallest part of each overpartition in the set counted by $\bar a_2(n)$. For example, if $\bar{k}+k$ is a part in a partition in this set, then we add $n$ to $k$. Then the new overpartition belongs to the set counted by $\bar p(2n,n)$ and its largest part is not overlined, which is greater than all other parts. So corresponding to this overpartition there is another unique overpatition in the second set, all of whose parts are same except now the largest part is overlined.
\end{proof}

If we define $\bar a_m(n, \ell)$ to be the number of overpartitions of $n$ where the smallest part occurs at least $m$ times and the largest part minus the smallest part is $\ell$, then using the above method we get the following theorem, analogous to Proposition \ref{prop3}.
\begin{theorem}
For all $n\geq 1$ and $m\geq 2$, we have
\[2\bar{a}_m(n)=\bar{a}_{m-1}(2n,n).\]
\end{theorem}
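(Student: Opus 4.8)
The plan is to establish a one-to-two correspondence between the overpartitions counted by $\bar{a}_m(n)$ and those counted by $\bar{a}_{m-1}(2n,n)$, exactly paralleling the argument used for Theorem \ref{thm:over-1} (which is the case $m=2$, since $\bar{a}_1(2n,n)=\bar{p}(2n,n)$).

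First I would define the forward map. Take any overpartition $\lambda$ counted by $\bar{a}_m(n)$, so its smallest part $k$ occurs at least $m\geq 2$ times. Since at most one copy of $k$ can be overlined, at least one copy is non-overlined; add $n$ to a non-overlined copy of $k$, producing a new part $k+n$. The total becomes $n+n=2n$, the value $k$ still appears (now at least $m-1$ times, using $m\geq 2$) and remains the smallest part, and the new part $k+n$ exceeds every other part because every part of $\lambda$ is at most $n<n+k$. Thus $k+n$ is the unique largest part, the difference between the largest and smallest parts is exactly $n$, and the resulting overpartition lies in $\bar{a}_{m-1}(2n,n)$ with its largest part non-overlined. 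Overlining or not overlining this unique largest part then yields two distinct overpartitions in $\bar{a}_{m-1}(2n,n)$, which are the two images of $\lambda$.

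Next I would check that the reverse map is well defined, which is the place that needs a small but essential observation. In any overpartition counted by $\bar{a}_{m-1}(2n,n)$ the largest part equals $k+n$, where $k$ is the smallest part; since $2(k+n)=2k+2n>2n$, the part $k+n$ can occur only once. Hence the largest part is unique, and we may unambiguously subtract $n$ from it and remove any overline, turning it into a non-overlined copy of $k$. This produces an overpartition of $n$ whose smallest part $k$ now occurs at least $m$ times, i.e. an element of $\bar{a}_m(n)$, and the two overpartitions (largest part overlined or not) map back to the same element. This shows the forward map is exactly two-to-one onto, giving $2\bar{a}_m(n)=\bar{a}_{m-1}(2n,n)$.

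I expect the only genuinely delicate point to be the bookkeeping of overlines: one must confirm that the copy of $k$ to which $n$ is added can always be chosen non-overlined (guaranteed by $m\geq 2$), that the newly created largest part occurs exactly once so that its two overlining states are both legitimate, and that un-overlining it in the reverse map never conflicts with an already-overlined copy of $k$ among the smallest parts. Everything else is the same length-preserving, weight-shifting manipulation as in Theorem \ref{thm:over-1}.
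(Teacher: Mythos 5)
Your proposal is correct and follows essentially the same route as the paper: the paper also establishes a one-to-two correspondence by adding $n$ to a non-overlined (rightmost) copy of the smallest part, observing that the resulting largest part exceeds all others and is not overlined, and then pairing each image with its twin having the largest part overlined. Your write-up is in fact more careful than the paper's about why the reverse map is well defined (uniqueness of the largest part and the overline bookkeeping), but the underlying bijection is identical.
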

\noindent Theorem \ref{thm:over-1} follows from this when $m=2$.

\subsection{\texorpdfstring{$\ell$}{}-regular partitions}

The partitions of $n$ with no parts divisible by $\ell$ are called $\ell$-regular partitions, and the total number of such partitions is denoted by $b_{\ell}(n)$. We have the following generating function
\[\sum_{n=0}^{\infty}b_{\ell}(n)q^n = \prod_{n=1}^{\infty}\frac{1-q^{\ell n}}{1-q^n}= \frac{(q^\ell;q^\ell)_\infty}{(q)_{\infty}}= \frac{1}{(q,q^2,\ldots,q^{\ell-1};q^\ell)_\infty},\]
where
\[
(a_1, a_2, \ldots, a_k; q)_\infty:=\prod_{i=1}^k(a_i; q)_\infty.
\]

Let us define $a_{m(\ell)}(n)$ to be the number of $\ell$-regular partitions of $n$ where the smallest part occurs at least $m$ times. We get the generating function of $a_{m(\ell)}(n)$ as follows
\begin{align*}
\sum_{n=1}^{\infty} a_{m(\ell)}(n)q^n =& \sum_{k=0}^{\infty} \sum_{t=1}^{\ell-1} \frac{q^{(\ell k+t)m}}{\prod_{r=1}^{t-1}(q^{\ell(k+1)+r};q^\ell)_\infty \times \prod_{r=t}^{\ell-1}(q^{\ell k+r};q^\ell)_\infty}\\
=& \sum_{k=0}^{\infty} \sum_{t=1}^{\ell-1} \frac{q^{(\ell k+t)m} (q^{\ell k+1};q)_{t-1}} {(q^{\ell k+1},q^{\ell k+2},\ldots,q^{\ell k+\ell-1};q^\ell)_\infty}.
\end{align*}
If $\ell=2$, then the partitions contain no even parts. In this case, we get
\[\sum_{n=1}^{\infty} a_{m(2)}(n)q^n=\frac{q^m}{(q;q^2)_\infty}\sum_{k=0}^{\infty}(q;q^2)_kq^{2km}.\]

Analogous to Proposition \ref{prop1}, we now have the following result.

\begin{theorem}\label{regular:a2}
For all $n\geq 1$, we have
\[a_{2(2)}(n)=b_2(n)+b_2(n+1)-b_2(n+2).\]
\end{theorem}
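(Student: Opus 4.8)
The plan is to prove the claimed identity at the level of generating functions, in the same analytic spirit as the proofs of Theorems \ref{thm:a3} and \ref{thm:a4}. First I would record the right-hand side as a single rational multiple of $1/(q;q^2)_\infty$. Since $\sum_{n\geq 0} b_2(n)q^n = 1/(q;q^2)_\infty =: G(q)$ with $b_2(0)=b_2(1)=b_2(2)=1$, shifting the series gives $\sum_{n\geq 1} b_2(n+1)q^n = q^{-1}(G-1-q)$ and $\sum_{n\geq 1} b_2(n+2)q^n = q^{-2}(G-1-q-q^2)$, so that
\[
\sum_{n=1}^{\infty}\big(b_2(n)+b_2(n+1)-b_2(n+2)\big)q^n=\Big(1+\tfrac1q-\tfrac1{q^2}\Big)G(q)-1+\tfrac1{q^2}.
\]
Thus it suffices to show that the generating function of $a_{2(2)}(n)$ equals this same expression.

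Next I would turn to the left-hand side, namely $\frac{q^2}{(q;q^2)_\infty}\sum_{k=0}^{\infty}(q;q^2)_k q^{4k}$, and evaluate the inner sum in closed form. The crucial tool is the telescoping identity $(q;q^2)_k-(q;q^2)_{k+1}=(q;q^2)_k q^{2k+1}$, equivalently $(q;q^2)_k q^{2k}=q^{-1}\big((q;q^2)_k-(q;q^2)_{k+1}\big)$. Summing this once and using $(q;q^2)_k\to(q;q^2)_\infty$ as $k\to\infty$ gives $\sum_{k\geq 0}(q;q^2)_k q^{2k}=\frac{1-(q;q^2)_\infty}{q}$. Writing $q^{4k}=q^{2k}\cdot q^{2k}$ and applying the same identity a second time reduces $\sum_{k\geq 0}(q;q^2)_k q^{4k}$ to shifted copies of the order-$q^{2k}$ sum, yielding
\[
\sum_{k=0}^{\infty}(q;q^2)_k q^{4k}=\big(1-(q;q^2)_\infty\big)\Big(\tfrac1{q^2}-\tfrac1{q^4}\Big)+\tfrac1{q^3}.
\]

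Substituting this into $\frac{q^2}{(q;q^2)_\infty}\sum_k (q;q^2)_k q^{4k}$ and simplifying, the factors of $(q;q^2)_\infty$ combine so that the generating function of $a_{2(2)}(n)$ becomes exactly $\big(1+\tfrac1q-\tfrac1{q^2}\big)G(q)-1+\tfrac1{q^2}$, matching the right-hand side computed above and proving the theorem. I expect the main obstacle to be not the telescoping itself, which is routine once spotted, but the bookkeeping of the finitely many negative-power correction terms: one must check that the rational pieces coming from the shifted $b_2$-series and from the telescoped sum agree, so that all contributions in nonpositive powers of $q$ cancel and the identity holds precisely for $n\geq 1$. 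This is the analogue of verifying $\mathcal{C}+\mathcal{B}+\mathcal{A}=0$ in the proof of Theorem \ref{thm:a4}.
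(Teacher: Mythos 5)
Your proof is correct, but it takes a genuinely different route from the paper. I verified your key steps: the telescoping identity $(q;q^2)_k q^{2k}=q^{-1}\bigl((q;q^2)_k-(q;q^2)_{k+1}\bigr)$, the resulting closed form $\sum_{k\geq0}(q;q^2)_kq^{4k}=\bigl(1-(q;q^2)_\infty\bigr)\bigl(q^{-2}-q^{-4}\bigr)+q^{-3}$, and the fact that both sides of the theorem have generating function $\bigl(1+q^{-1}-q^{-2}\bigr)G(q)-1+q^{-2}$ with $G(q)=1/(q;q^2)_\infty$; everything checks. The paper instead gives a two-line bijective proof: $b_2(n)-a_{2(2)}(n)$ counts the $2$-regular partitions of $n$ whose least part occurs exactly once, $b_2(n+2)-b_2(n+1)$ counts the $2$-regular partitions of $n+2$ having no part equal to $1$, and adding $2$ to the unique least part (all parts are odd, so every other part already exceeds it by at least $2$) maps the first set bijectively onto the second, with inverse given by subtracting $2$ from the least part. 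The paper's argument is shorter and explains the identity combinatorially, in the same style as Dhar's proofs and the overpartition results of Section \ref{sec:over}. Your analytic route costs the extra bookkeeping of negative-power corrections that you rightly flag, but it buys something the bijection does not: the same telescoping device evaluates $\sum_{k\geq0}(q;q^2)_kq^{2mk}$ recursively for every $m$, so your method points toward closed forms for $a_{m(2)}(n)$ for general $m$ --- precisely Question (5) in Section \ref{sec:final} --- whereas the paper's bijection does not obviously extend past $m=2$, since for larger $m$ one must account for least parts occurring exactly once, twice, and so on.
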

\begin{proof}
The quantity  $b_2(n)-a_{2(2)}(n)$ counts the $2$-regular partitions of $n$ in which the least part occurs exactly once. Again, $b_2(n+2)-b_2(n+1)$ counts the $2$-regular partitions of $n+2$ in which there is no $1$ as a part. These two quantities are equal. We add $2$ to the least part of the $2$-regular partitions counted by $b_2(n)-a_{2(2)}(n)$. And conversely we subtract 2 from the least part of the 2-regular partitions given by $b_2(n+2)-b_2(n+1)$.
\end{proof}

Let us now define $a_{m(\ell)}(n, k)$ to be the number of $\ell$-regular partitions of $n$ where the smallest part occurs at least $m$ times and the largest part minus the smallest part is $k$, then like before we get the following theorem, analogous to Propositions \ref{prop2} and \ref{prop3}.

\begin{theorem} If $m,\ell\geq2$, and $n$ is divisible by $\ell$, then
\[a_{2(\ell)}(n)=b_\ell(2n,n).\]
More generally,
\[a_{m(\ell)}(n)=a_{m-1(\ell)}(2n,n).\]
\end{theorem}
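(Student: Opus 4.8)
The plan is to reduce everything to the general identity $a_{m(\ell)}(n)=a_{m-1(\ell)}(2n,n)$ and to establish that by an explicit bijection in the spirit of Proposition \ref{prop3}, taking care that the construction preserves $\ell$-regularity. The first assertion is then immediate: setting $m=2$ gives $a_{2(\ell)}(n)=a_{1(\ell)}(2n,n)$, and since the requirement that the smallest part occur at least once is vacuous, $a_{1(\ell)}(2n,n)$ is precisely the number $b_\ell(2n,n)$ of $\ell$-regular partitions of $2n$ with largest-minus-smallest part equal to $n$.

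For the forward map I would start from an $\ell$-regular partition of $n$ counted by $a_{m(\ell)}(n)$, with smallest part $k$ occurring at least $m$ times. I would delete one copy of $k$ and adjoin a single new part of size $n+k$. A quick sum check gives $n-k+(n+k)=2n$; because every original part is at most $n<n+k$, the adjoined part is strictly largest, the smallest part is still $k$ (now occurring at least $m-1$ times), and the largest-minus-smallest difference is $(n+k)-k=n$. Hence the image lies in $a_{m-1(\ell)}(2n,n)$. For the inverse map I would take a partition counted by $a_{m-1(\ell)}(2n,n)$ with smallest part $k$, hence largest part $n+k$, delete the largest part, and adjoin a copy of $k$; the sum becomes $2n-(n+k)+k=n$ and the smallest part $k$ then occurs at least $m$ times.

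The step I expect to be the crux is checking that the inverse map is well defined, that is, that the largest part $n+k$ occurs \emph{exactly} once so that deleting ``the'' largest part is unambiguous. This follows from a one-line size estimate: two copies of $n+k$ already contribute $2(n+k)=2n+2k>2n$ because $k\geq1$, exceeding the total $2n$. The remaining point is $\ell$-regularity. In the inverse map we only duplicate an existing part, so nothing new can be divisible by $\ell$. In the forward map we introduce the part $n+k$; here I would invoke the hypothesis $\ell\mid n$ together with $\ell\nmid k$ (valid since $k$ is a part of an $\ell$-regular partition) to conclude $n+k\equiv k\not\equiv0\Mod{\ell}$, so the new partition is again $\ell$-regular. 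These two maps are visibly inverse to one another, which completes the bijection and hence the proof.
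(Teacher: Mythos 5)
Your proof is correct and is essentially the argument the paper intends: the theorem is stated there with only a pointer back to the bijection of Proposition \ref{prop3} (``like before''), which is exactly your add/remove-largest-part map, and the paper's remark that the result fails when $\ell\nmid n$ corresponds precisely to your use of $\ell\mid n$ and $\ell\nmid k$ to keep $n+k$ from being divisible by $\ell$. Your write-up simply makes explicit the details (uniqueness of the largest part, preservation of $\ell$-regularity) that the paper leaves implicit.
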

\noindent This result is not true if $n$ is not divisible by $\ell$, since the sum of the smallest part and $n$ may be divisible by $\ell$. For example, $b_2(2n,n)=0$ if $n$ is odd. But, we get the following more general result in this case.
 
\begin{theorem} If $n$ is odd, then
 \[a_{2(2)}(n)=b_2(2n+1,n+1).\]
 
 More generally, let $m,\ell\geq2$, and $n$ is not divisible by $\ell$. If $n=\ell k+r$, for some interger $k$ and $1\leq r<\ell$, then
\[a_{m(\ell)}(n)=a_{(m-1)(\ell)}(2n+\ell-r,n+\ell-r).\]
\end{theorem}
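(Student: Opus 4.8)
The plan is to build an explicit bijection in the same spirit as Proposition \ref{prop3}, but with the shift amount chosen to respect $\ell$-regularity. Write $n=\ell k+r$ with $1\le r<\ell$ and set $\Delta:=n+\ell-r$. The crucial observation is that $\Delta=\ell(k+1)$ is a multiple of $\ell$; this is precisely what fails for the naive shift by $n$ when $\ell\nmid n$, and it is what forces the replacement of $n$ by $n+\ell-r$ in the statement.

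First I would define the forward map on the partitions counted by $a_{m(\ell)}(n)$. Given such a partition $\lambda$ with smallest part $s$ (so $\ell\nmid s$, and $s$ occurs at least $m$ times), I delete one copy of $s$ and insert a single new part $s+\Delta$. I would then check the defining properties of the target class $a_{(m-1)(\ell)}(2n+\ell-r,\,n+\ell-r)$: the total becomes $n-s+(s+\Delta)=2n+\ell-r$; since $\ell\mid\Delta$ we have $s+\Delta\equiv s\not\equiv 0\Mod{\ell}$, so the partition stays $\ell$-regular; because $s+\Delta>n\ge\lambda_1$ the inserted part is the strict maximum, while $s$ still occurs at least $m-1\ge1$ times and so remains the minimum, whence the difference between largest and smallest parts is exactly $\Delta=n+\ell-r$.

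Next I would define the inverse map: given a partition $\mu$ counted by $a_{(m-1)(\ell)}(2n+\ell-r,\,n+\ell-r)$ with smallest part $s$ and largest part $s+\Delta$, delete the largest part and adjoin a copy of $s$. The step requiring genuine care, and the main obstacle, is showing this is well defined, i.e.\ that the largest part of $\mu$ occurs \emph{exactly once}, so that ``deleting the largest part'' is unambiguous. I would prove this by a size estimate: if $s+\Delta$ occurred at least twice, then together with the at least $m-1$ copies of $s$ the total would be at least $2(s+\Delta)+(m-1)s=(m+1)s+2\Delta$, forcing $(m+1)s+(\ell-r)\le 0$, which is impossible since $s\ge1$ and $\ell-r\ge1$. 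Granting uniqueness, the inverse map returns a partition of $n$ that is $\ell$-regular, has smallest part $s$ occurring at least $m$ times, and is visibly undone by the forward map (and conversely), so the two maps establish $a_{m(\ell)}(n)=a_{(m-1)(\ell)}(2n+\ell-r,\,n+\ell-r)$.

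Finally, the displayed special case is the instance $\ell=m=2$: for odd $n$ we have $r=1$, hence $\ell-r=1$, $2n+\ell-r=2n+1$ and $n+\ell-r=n+1$, while $a_{1(2)}(2n+1,n+1)=b_2(2n+1,n+1)$ because requiring the smallest part to occur at least once is a vacuous condition. I expect the only delicate points to be the uniqueness-of-maximum estimate and the routine but necessary verifications that both maps stay within the relevant $\ell$-regular classes; everything else runs parallel to the argument already used for the $\ell\mid n$ case.
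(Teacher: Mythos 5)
Your proof is correct and is essentially the argument the paper intends: the paper states this theorem without proof, relying on the same delete-a-smallest-part/insert-a-large-part bijection used for Proposition \ref{prop3} and its overpartition analogue, with the shift $\Delta=n+\ell-r=\ell(k+1)$ chosen to be divisible by $\ell$ precisely so that $\ell$-regularity is preserved. Your extra verification that the largest part of $\mu$ occurs exactly once (via the size estimate forcing $(m+1)s+\ell-r\le 0$) is a detail the paper leaves implicit, and it is carried out correctly.
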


\section{Concluding Remarks}\label{sec:final}

Several natural questions arise from the work we have described so far. Some of these we list below.
\begin{enumerate}
    \item Proposition \ref{prop3} is a very simple generalization. Is it possible to find $a_m(n)$ in terms of partition functions such as $p(an,bn)$, where $a,b$ are positive integers?
    \item Is it possible to find generating functions of analogues of $a_m(n)$ (and other statistics defined here) for other partition functions such as $(\ell,m)$-regular partitions, $t$-core partitions, partition with designated summands, $k$-colored partitions, etc.?
    \item Dhar \cite{dhar2021proofs} pointed out that the generating function of $p(2n,n)$ is still not found in a `nice' closed form. Similarly, the generating function of $a_m(2n,n)$ is also not found here. Can we find these?
    \item Is it possible to find $q$-series proofs of the results in Section \ref{sec:over} which are proved combinatorially?
    \item Is it possible to find $a_{m(2)}(n)$, $a_{2(\ell)}(n)$, and $a_{m(\ell)}(n)$ for $m,\ell>2$?
    \item If an overlined part is not equal to a non-overlined part, even if their value is equal (for example $\bar{1}\neq 1$), then the following is the generating function of $\bar{a}_m(n)$
\[\sum_{n=1}^{\infty} \bar{a}_m(n)q^n=\sum_{k=1}^{\infty} \dfrac{q^{mk}(-q^k)_\infty}{(q^k)_\infty}.\]
Then, can we find $\bar{a}_2(n)$ and $\bar{a}_m(n)$?
\item We can prove for $n\geq 1$, \[p(2n,n)=1+p(n-2)+\sum_{m=2}^{\lfloor\frac{n}{3}\rfloor} p^\star_m(n-2m),\]
where $p^\star_m(n-2m)$ is the number of partitions of $n-2m$ with the least part greater than or equal to $m$. Are there any interesting properties of $a_m(n,\ell)$ that can be proved using this relation?
\item Andrews, Beck and Robbins \cite[Theorem 2]{andrews-pams} also give a generalization of Theorem \ref{thm:and} ($m=1$) to partitions with a set of specified distances. It would be interesting to explore this direction with some of the statistics defined in this paper.
\item Breuer and Kronholm \cite{BreuerKronholm} extended the result of Andrews, Beck and Robbins \cite{andrews-pams} to partitions where the fixed difference between the largest and smallest parts is at most a fixed integer. Chapman \cite{chapman} gave a combinatorial proof of this result. It would be interesting to extend this setting for the statistics defined in this paper.
\end{enumerate}

\section*{Acknowledgements}

The authors thank the anonymous referee for several helpful comments, specially regarding the presentation of the proof of Theorem \ref{G1} and the proof of Theorem \ref{thm:am}, where we used the referee's arguments.

\label{'ubl'}  

\begin{thebibliography}{99} 

\bibitem[And98]{andrews1998theory}
George~E. Andrews.
\newblock {\em The theory of partitions}.
\newblock Cambridge Mathematical Library. Cambridge University Press,
  Cambridge, 1998.

  
\bibitem[ABR15]{andrews-pams}
George~E. Andrews, Matthias Beck, and Neville Robbins.
\newblock Partitions with fixed differences between largest and smallest parts.
\newblock {\em Proc. Amer. Math. Soc.}, 143(10):4283--4289, 2015.

\bibitem[BK16]{BreuerKronholm}
Felix Breuer and Brandt Kronholm.
\newblock A polyhedral model of partitions with bounded differences and a
  bijective proof of a theorem of {A}ndrews, {B}eck, and {R}obbins.
\newblock {\em Res. Number Theory}, 2:Paper No. 2, 15, 2016.

\bibitem[Cha16]{chapman}
Robin Chapman.
\newblock Partitions with bounded differences between largest and smallest
  parts.
\newblock {\em Australas. J. Combin.}, 64:376--378, 2016.

\bibitem[Che17]{chern}
Shane Chern.
\newblock A curious identity and its applications to partitions with bounded
  part differences.
\newblock {\em New Zealand J. Math.}, 47:23--26, 2017.

\bibitem[CY18]{chern-yee}
Shane Chern and Ae~Ja Yee.
\newblock Overpartitions with bounded part differences.
\newblock {\em European J. Combin.}, 70:317--324, 2018.

\bibitem[Dha24]{dhar2021proofs}
Aritram Dhar.
\newblock Proofs of two formulas of {V}ladeta {J}ovovic.
\newblock {\em Math. Student}, 93(1--2):to appear, 2024.

\bibitem[LZ21]{lin-zheng}
Bernard L.~S. Lin and Saisai Zheng.
\newblock {$k$}-regular partitions and overpartitions with bounded part
  differences.
\newblock {\em Ramanujan J.}, 56(2):685--695, 2021.



\end{thebibliography}
\end{document}